  \theoremstyle{plain}
\begin{document}
\title{
A Probabilistic Proof of the Perron-Frobenius Theorem
}

\author[1]{Peter W. Glynn}
\author[2]{Paritosh Y. Desai}
\affil[1]{Department of Management Science and Engineering, Stanford University, Stanford CA}
\affil[2]{Target Corporation, Sunnyvale, CA}

\date{}
\maketitle

\abstract{The Perron-Frobenius theorem plays an important role in many areas of management science and operations research. This paper provides a probabilistic perspective on the theorem, by discussing a proof that exploits a probabilistic representation of the Perron-Frobenius eigenvalue and eigenvectors in terms of the dynamics of a Markov chain. The proof provides conditions in both the finite-dimensional and infinite-dimensional settings under which the Perron-Frobenius eigenvalue and eigenvectors exist. Furthermore, the probabilistic representations that arise can be used to produce a Monte Carlo algorithm for computing the Perron-Frobenius eigenvalue and eigenvectors that will be explored elsewhere.}

\section{Introduction}
\label{sec:intro}
The Perron-Frobenius theorem for non-negative matrices plays an important role in many areas of operations research, the management sciences, and applied mathematics in general. An important such setting is that of Markov population decision chains, in which Perron-Frobenius theory can be used to describe the rate at which the total reward aggregated across the entire population grows as a function of time; see, for example, \cite{RV75}, \cite{Pliska76}, \cite{Rothblum84},  \\ \cite{RW82}, and  \cite{RV92}. 

Given the importance of the Perron-Frobenius theorem, it is convenient to have multiple perspectives on the result. In this paper, we explore the Perron-Frobenius theorem from a probabilistic viewpoint. In particular, we give a proof of the Perron-Frobenius theorem that relies on a representation of the Perron-Frobenius eigenvalue and its associated row and column eigenvectors in terms of expectations defined via the dynamics of a Markov chain. The representation takes advantage of regenerative structure that is often available in the Markov setting. Assuming a simple condition (see A1 in Section 3), we can quickly argue the existence of a positive eigenvalue and corresponding positive row and column eigenvectors using our probabilistic methods; see the first part of the proof of Theorem 1.

But this probabilistic representation carries additional advantages. In particular, our methods extend directly to the setting of infinite-dimensional non-negative matrices and even to continuous state space non-negative operators. Such generalizations are useful in a number of applied probability settings, most particularly when analyzing the quasi-stationary behavior of a Markov chain living on a countable or continuous state space. In a follow-on paper, we will show how our probabilistic representation also allows one to develop Monte Carlo algorithms for computing the Perron-Frobenius eigenvalue and eigenvectors efficiently and without state space discretization, even in the setting of continuous state space. While some elements of the ideas we present here exist within the probability literature (see, for example, Section 2 of \cite{NN87}), our discussion is intended to provide a more unified and accessible framework within which the key probability aspects can be easily understood and identified.

      This paper is organized as follows. In Section 2, we study a particular infinite-dimensional non-negative matrix in some depth, to help clarify some of the complications that arise in extending the Perron-Frobenius theorem to the infinite-dimensional setting. In particular, the example shows that in such an infinite-dimensional context, a continuum of positive eigenvalues, each accompanied by positive row and column eigenvectors, can exist. However, only one such eigenvalue describes the growth rate of successive powers of entries of the matrix, and it is this eigenvalue that we view as the natural generalization of the finite-dimensional Perron-Frobenius eigenvalue. Thus, the Perron-Frobenius eigenvalue and eigenvectors are not characterized via positivity considerations in this infinite-dimensional setting. In Section 3, we focus on finite and infinite-dimensional matrices, and provide our probabilistic representation in Theorem 1. We then show that it corresponds to the Perron-Frobenius eigenvalue in Theorem 2. The rest of the section is devoted to a discussion of sufficient conditions guaranteeing the validity of the hypotheses underlying Theorems 1 and 2. Finally, in Section 4, we quickly show how the ideas naturally extend to continuous state space non-negative operators.

\section{An Example}
\label{sec:2ex}

We wish to provide a version of the Perron-Frobenius theorem that is  valid for infinite-dimensional  matrices, and that can be readily extended to continuous state space. Before stating and proving this result in the next section, we wish to provide an example that illustrates the additional complications that can arise in the infinite-dimensional setting.

Consider a birth-death Markov chain  $X = ( X_{n} : n \geq 0)$ having state space $\mathbf{N}_{+}=  \{0,1,2, \hdots\}$, with transition probabilities
\begin{align*}
  P(x, x+1)  =  & p &&, x \geq 0 \\
  P(x, x-1)  =  & q \hspace{1em} (\triangleq 1-p) &&, x \geq 1, 
\end{align*}
with $P(0, 0) = q $ (with $0 < p < 1)$. Such a Markov chain arises, for example, as a slotted-time queue in which the number of arrivals in each slot of time is either $2$ (with probability $p$) or $0$ (with probability $q$), and that can serve one customer per slot of time whenever a customer is available to be served. If we wish to study the dynamics of $X$ conditional on the system  not having emptied by time $n$, we are led to the consideration of probabilities of the form $P_{x} (X_{n} = y, T > n)$, where $P_{x}(.) \triangleq P(\cdot | X_{0} = x)$ and $T = \inf{} \{n \geq 0: X_{n} = 0 \}$. In other words, we are computing the quasi-stationary behavior of $X$, associated with non-entry into $\{0\}$ over the time interval $[0, n]$. Clearly,
\[
  P_{x}(X_{n} = y, T > n) = B^{n}(x, y),
\]
where $B = (B(x,y): x, y \in S)$ is the non-negative infinite-dimensional matrix in which $B(x,y) = P(x,y)$ for $x, y \in S = \{ 1, 2, \hdots \}$. The matrix $B$ is {\em irreducible}, in the sense that for each $x, y \in S$, there exists $n = n(x, y)$ for which $B^{n}(x, y) > 0$ and is sub-stochastic (since the row sum for state $1$ is $p < 1$).

The natural extension of the notion of a Perron-Frobenius eigenvalue and (column) eigenvector from the finite dimensional setting to the infinite-dimensional matrix $B$ involves consideration of the eigensystem
\begin{align}
  \label{eqn:eigenrel}
  Bu & = \lambda u
\end{align}
for positive solutions $u$ and $\lambda$. For this example, this corresponds to studying the linear system
\begin{align}
  \label{eqn:linsystem}
  p u(x+1) + q u(x-1) & = \lambda u(x), x \geq 2,  
\end{align}
with the boundary condition
\begin{align}
  \label{eqn:linsystem2}
  p u(2)  & = \lambda u(1).  
\end{align}
To solve the recursive equation (\ref{eqn:linsystem}), it is standard practice to consider solutions that are linear combinations of functions of the form $z^{x}$, thereby leading to consideration of a quadratic equation  for $z$, namely,
\[
  p z^{2} - \lambda z + q = 0.
\]
The roots of this equation are given by
\[
  z_{1}  = \frac{\lambda - \sqrt{\lambda^{2} - 4pq}}{2p}, 
\]
and
\begin{align}
  \label{eq:quadsol}
  z_{2} & = \frac{\lambda + \sqrt{\lambda^{2} - 4pq}}{2p}, 
\end{align}
provided $\lambda \geq 2\sqrt{pq}$. If $0 \leq \lambda < 2 \sqrt{pq}$, then $z_{1}$ and $z_{2}$ are complex, and no linear combination of $z_{1}^{x}$ and  $z_{2}^{x}$ can be non-negative for all $x \geq 1$, so that positive eigenvectors cannot exist for $\lambda$ in this range. We therefore  now restrict ourselves to  consideration of the case  in which $\lambda \geq 2 \sqrt{pq}$.

Suppose first that $\lambda > 2 \sqrt{pq}$. If we take $u(x) = a z_{1}^{x} +  z_{2}^{x} = z_{1}^{x} (a + (z_{2}/z_{1})^{x})$, then equation (\ref{eqn:linsystem2}) requires that
\[
  p( a z_{1}^{2} + z_{2}^{2}) = \lambda (a z_{1} + z_{2}),
\]
in which case
\[
  a = - \frac{z_{2}}{z_{1}} \frac{\lambda - p z_{2}}{\lambda - p z_{1}}.
\]

Since $0 < z_{1} < z_{2} < \frac{\lambda}{p}$, $a < 0$. However,
\begin{align*}
  a + (\frac{z_{2}}{z_{1}})^{1} & = \frac{z_{2}}{z_{1}} ( 1 - \frac{\lambda - p z_{2}}{\lambda - p z_{1}}) \\
  & = p \frac{z_{2}}{z_{1}} \frac{z_{2}-z_{1}}{\lambda - p z_{1}}  > 0, \\
\end{align*}
so $a + (\frac{z_{2}}{z_{1}})^{x} > 0$ for $x \geq 1$, and hence $u = (u(x): x \geq 1)$ is a strictly positive column eigenvector solution to equation (\ref{eqn:eigenrel}) for every $\lambda > 2 \sqrt{pq}$. So, there is a continuum of positive $\lambda$'s for which positive eigenvector solutions to equation (\ref{eqn:eigenrel}) exist.

For $\lambda = 2 \sqrt{pq}$, $z_{1} = z_{2} = \sqrt{\frac{q}{p}}$ and we are led to consideration of a solution  to equation (\ref{eqn:linsystem}) of the form
\[
  u(x) = ({q}/{p})^\frac{x}{2}(a + x)
\]
for some choice of $a$. In this case, equation (\ref{eqn:linsystem2}) requires that $a=0$, so the solution $u(x) = x (q/p)^{\frac{x}{2}}$ for $x \geq 1$ is a positive column eigenvector solution of equation (\ref{eqn:eigenrel}) when $\lambda = 2 \sqrt{pq}$.

We can similarly consider the row eigenvector $\eta = (\eta(x):  x \in S)$ corresponding to eigenvalue $\lambda$, namely
\begin{align}
  \label{eqn:etaeigen}
  \eta B = \lambda \eta.
\end{align}
Again, we are interested in positive solutions $\lambda$ and $\eta$ to equation (\ref{eqn:etaeigen}). Here, the associated linear system is
\begin{align}
  \label{eqn:linsystem3}
  p \eta(y-1) + q \eta(y+1) = \lambda \eta(y), y \geq 2,
\end{align}
subject to the boundary condition
\begin{align}
  \label{eqn:linsystem4}
  q \eta(2) = \lambda \eta(1).
\end{align}

As in the column vector setting, $\eta$ can be computed explicitly. The general solution takes the form
\[
  \eta (x) = \tilde{z_{1}}^{x} ( b + (\frac{\tilde{z_{2}}}{\tilde{z_{1}}})^{x})
\]
for $x \geq 1$, where
\[
  \tilde{z_{1}} = \frac{\lambda - \sqrt{\lambda^{2} - 4pq}}{2q} \hspace{2em} (= \frac{1}{z_{2}}),
\]

\[
  \tilde{z_{2}} = \frac{\lambda + \sqrt{\lambda^{2} - 4pq}}{2q} \hspace{2em} (= \frac{1}{z_{1}}),
\]
To satisfy equation (\ref{eqn:linsystem4}), we must then take
\[
  b = - \frac{\tilde{z_{2}}}{\tilde{z_{1}}}(\frac{\lambda - q \tilde{z_{2}}}{\lambda - q \tilde{z_{1}}}).
\]

Because $0 < \tilde{z_{1}} < \tilde{z_{2}} < \frac{\lambda}{q}$, it follows that $(\eta (x) : x \geq 1)$ is a strictly positive row eigenvector solution to equation (\ref{eqn:etaeigen}), for each $\lambda > 2 \sqrt{pq}$. On the other hand, for $\lambda = 2 \sqrt{pq}$, $\eta (x) = (\frac{p}{q})^{x/2} x$ for $x \geq 1$, so a positive row eigenvector also exists at this value of $\lambda$.

We conclude that in this example, there exist positive row and column eigenvector solutions to the eigensystem for  $B$ at all $\lambda \in [2\sqrt{pq}, \infty)$. So, the Perron-Frobenius eigenvalue and eigenvectors are not uniquely defined by positivity considerations in this infinite-dimensional irreducible setting.

We now explore this region of $\lambda$ in more detail. Suppose $u$ is a column eigenvector of $B$ satisfying equation (\ref{eqn:eigenrel}). Then, the matrix $\tilde{P} = (\tilde{P}(x, y): x, y \in S)$ with entries given by
\[
 \tilde{P}(x, y) = \frac{B(x,y)u(y)}{\lambda u(x)}
\]
for $x, y \in S$ is an irreducible stochastic matrix. Note that the Markov chain $X$ associated with $\tilde{P}$ is again of birth-death form, with $\tilde{P}(1,2) = 1$ and
\[
  \tilde{P} (x, x+1) = \frac{p u(x+1)}{\lambda u(x)},
\]

\[
  \tilde{P} (x, x-1) = \frac{q u(x-1)}{\lambda u(x)},
\]
for $x \geq 2$. For $\lambda > 2 \sqrt{pq}$, $u(x) \sim z_{2}^{x}$ as $x \to \infty$, where we use the notation of $a(x) \sim b(x)$ as $x \to \infty$ to denote the fact that $a(x)/b(x) \to 1$ as $x \to \infty$.

Consequently,
\[
  \tilde{P}(x, x+1) \to \frac{p}{\lambda} z_{2} = \frac{1}{2} (1 + \sqrt{1 - \frac{4pq}{\lambda^{2}}})
\]
and
\[
  \tilde{P}(x, x-1) \to \frac{q}{\lambda} z_{2}^{-1} = \frac{1}{2} (1 - \sqrt{1 - \frac{4pq}{\lambda^{2}}})
\]
as $x \to \infty$ Hence, $\tilde{P}$ is the one-step transition matrix of a transient Markov chain for $\lambda > 2 \sqrt{pq}$. It follows that
\[
  \sum_{n=1}^{\infty} \tilde{P}^{n}(x, y) < \infty
\]
for each $x, y \in S$; see, for example, p$.\!$ 24 \cite{Chung66}. This immediately implies that
\begin{align}
  \label{eqn:bnsum}
  \sum_{n}^{\infty} \lambda^{-n} B^{n}(x,y) < \infty
\end{align}
for each $x, y \in S$. On the other hand, for $\lambda = 2 \sqrt{pq}$
\[
  \tilde{P}(x, x+1) = \frac{1}{2} (\frac{x+1}{x})
\]
and
\[
  \tilde{P}(x, x-1) = \frac{1}{2} (\frac{x-1}{x})
\]
for $x \geq 2$, while $\tilde{P}(1, 2) = 1$. Let $X = (X_{n}: n \geq 0$) be the Markov chain having this transition matrix, and let $T_{y} = \inf \{ n \geq 0: X_{n} = y \}$ be the hitting time of state $y$. If $v(x) = P_{x}(T_{1} < \infty)$, then $v = (v(x):  x \geq 1)$ satisfies the linear system
\[
  v(x) = \tilde{P}(x, x+1) v(x+1) + \tilde{P}(x, x-1) v(x-1)
\]
for $x \geq 2$, subject to $v(1) = 1$. The solution is
\[
  v(x) = \frac{\sum_{x}^{\infty}\frac{1}{y(y+1)}}{{\sum_{1}^{\infty}\frac{1}{y(y+1)}}}
\]

for $x \geq 1$; see p$.\!$ 31 \cite{HPS72}.  Since $\sum_{1}^{\infty}y^{-2} < \infty$, evidently $v(2) < 1$, so that $\tilde{P}$ is the transition matrix of a transient Markov chain at $\lambda = 2 \sqrt{pq}$. Consequently, equation (\ref{eqn:bnsum}) is also valid at $\lambda = 2 \sqrt{pq}$.

To explore the behavior of the sum appearing in equation (\ref{eqn:bnsum}) for $\lambda < 2 \sqrt{pq}$, we note that
\begin{align*}
  B^{2n + 2}(x, x) & = P_{x}(X_{2n+2} = x, T > 2n+2) \\
                   & \geq P(x, x+1) P_{x+1}(T_{x} = 2n+1) \\
                    & = p P_{1}(T_{0} = 2n+1), 
\end{align*}
where $X$ here is the Markov chain associated with the slotted time queuing model. This latter probability is a well known first passage time quantity and is given by
\begin{align*}
  P_{1} (T_{0} = 2n+1) = \frac{1}{2n+1} {{2n+1} \choose {n+1}} p^{n}q^{n+1}; 
\end{align*}
see, for example, p$.\!$  352 of \cite{Feller50}. Sterling's approximation (see p$.\!$  52 of \cite{Feller50}) implies that
\[
  P_{1} (T_{0} = 2n+1) \sim c n^{-\frac{3}{2}}(2\sqrt{pq})^{2n+1}
\]
as $n \to \infty$, for some constant $c \in (0, \infty)$. Consequently,
\begin{align}
  \label{eqn:bninfty}
  \sum_{n=0}^{\infty}\lambda^{-n}B^{n}(x,x) = \infty
\end{align}
for  $\lambda < 2 \sqrt{pq}$.

This discussion makes clear that while positive eigenvalues and eigenvectors exist for all $\lambda \geq 2 \sqrt{pq}$, $\lambda = 2 \sqrt{pq}$ holds special significance; compare equation (\ref{eqn:bnsum}) to (\ref{eqn:bninfty}). This state of affairs holds for general infinite-dimensional irreducible sub-stochastic matrices $G = (G(x,y): x, y \in S)$.
In particular, for all such matrices, there exists a finite-valued non-negative $R$ such that for all $x, y \in S$,
\[
  \sum_{n=0}^{\infty} \beta^{n} G^{n}(x, y) < \infty
\]
for $\beta < R$, whereas
\[
  \sum_{n=0}^{\infty} \beta^{n} G^{n}(x, y) = \infty
\]
for $\beta > R$ (see p$.\!$  200-201 of \cite{Seneta81}); the quantity $R$ is called the {\em convergence parameter} of $G$, and it is therefore natural to view $R^{-1}$ as the Perron-Frobenius eigenvalue $\lambda$ for such a matrix $G$. However, at this level of generality, there is no universal guarantee that corresponding positive column eigenvectors for $G$ will exist, either at $\lambda = R^{-1}$ or $\lambda > R^{-1}$. The example we have studied in this section is intended to illustrate the fact that while no general guarantee exists, such eigenvectors can potentially exist at all $\lambda \geq R^{-1}$ in this setting.

Under certain additional assumptions on $G$, there exist positive vectors $c = (c(x) : x \in S)$ and $d = (d(x): x \in S)$ for which
\begin{align}
  \label{eqn:gnsum}
  G^{n}(x, y) \sim \lambda^{n} c(x)d(y)
\end{align}
as $n \to \infty$, analogous to the behavior manifested in the setting of finite-dimensional irreducible aperiodic non-negative matrices. In the next section, we provide a simple probabilistic proof for equation (\ref{eqn:gnsum}) under probabilistically appropriate sufficient conditions.

\section{A Probabilistic Proof of the Perron-Frobenius Theorem}
\label{sec:3pftheorem}

Let $G = (G(x, y):  x, y \in S$ be an irreducible non-negative matrix in which $S$ can be either finite or countably infinite. We assume that
\[
  s = \sup_{x \in S} \sum_{y}G(x,y) < \infty,
\]
so that by passing to $B = (B(x,y): x, y \in S)$ with
\[
  B(x,y) = G(x,y)/s,
\]
we may assume, without loss of generality, that we are dealing with an irreducible sub-stochastic matrix.

Our probabilistic representation depends on augmenting the state space $S$. Specifically, for $\Delta \notin S$, let $S_{\Delta} = S \cup \{ \Delta\}$, and put
\[
  P(x, y) =
  \left\{
    \begin{array}{ll}
      B(x,y) & , x, y \in S \\
      1 - \sum_{y \in S}B(x,y) &, x \in S, y = \Delta \\
      1 &, x, y = \Delta \\
      0 &, x = \Delta, y \in S.
    \end{array}
    \right.
\]
Then, $P = (P(x, y): x, y \in S_{\Delta})$ is a stochastic matrix on $S_{\Delta}$. To construct a positive eigenvalue $\lambda_{*}$ and corresponding positive eigenvectors for $B$, we let $X = (X_{n} : n \geq 0)$ be the Markov chain on $S_{\Delta}$ having transition matrix $P$, and we let $T = \inf \{ n \geq 0; X_{n} = \Delta \}$ be the hitting time of $\Delta$. Fix $z \in S$ and let $\tau = \inf\{ n\geq 1: X_{n} = z  \}$ be the first return time to $z$. Note that the strong Markov property (SMP) implies that $X$ regenerates at visits to $z$, in the sense that $(X_{\tau + n} : n \geq 0)$ has the same distribution as $X$ under $P_{z}$, and is independent of $(\tau, X_{j} : 0 \leq j < \tau)$.

For $x \in S_{\Delta}$, let $E_{x}(\cdot) \triangleq E (\cdot | X_{0} = x)$. For $x \in S$, let $T_{x} = \inf \{ n \geq 0: X_{n} = x\}$  and $\tau_{x} = \inf\{ n \geq 1: X_{n} = x\}$. We are now ready to state our key assumption, under which a Perron-Frobenius eigenvalue and column/row eigenvectors will exist.
\\

\newtheorem*{ass1}{A1}\label{mass1}

\begin{ass1}

There exists $\theta \geq 0$ such that
\[
  E_{z}e^{\theta \tau}I(T > \tau) = 1.
\]
\end{ass1}

In {A1}, we use the interpretation
\[
  I(T > \tau) = \sum_{k = 1}^{\infty}I(\tau = k, T > k)
\]
so that, in particular, $\tau$ is finite-valued on $\{ T > \tau \}$ (even when $T = \infty)$.

Set
\begin{align*}
  \lambda_{*} & = e^{- \theta}, \\
  u_{*}(x) & = E_{x}e^{\theta \tau}I(T > \tau),\\
  \eta_{*}(y) & = E_{z}\sum_{j=0}^{\tau-1}e^{\theta j}I(X_{j} = y, T > j),
\end{align*}
for $x, y \in S$.
\\

\newtheorem{theorem}{Theorem}

\begin{theorem}
  Under  {A1}, $u_{*} = (u_{*}(x) : x \in S)$ and $\eta_{*} = (\eta_{*}(y) : y \in S) $ are positive finite-valued column and row vectors respectively, and
  \begin{align}
    \label{eqn:th1}
    B u_{*} & = \lambda_{*} u_{*}, \\
    \eta_{*} B & = \lambda_{*} \eta_{*}.
  \end{align}
\end{theorem}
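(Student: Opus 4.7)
The plan is to derive the two eigen-equations by strong Markov decompositions at the renewal state $z$ and then deduce positivity and finiteness from the irreducibility of $B$ together with A1. The key preliminary observation is $u_*(z) = E_z e^{\theta\tau} I(T>\tau) = 1$, which is precisely A1 and will serve as a boundary condition throughout.

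\textbf{The eigen-equations.} For $B u_* = \lambda_* u_*$, I would condition on the first step $X_1$. On $\{X_1 = \Delta\}$ the chain never returns to $z$, so $I(T>\tau) = 0$ and this term drops. On $\{X_1 = y\}$ with $y \in S$, the strong Markov property at time $1$ yields $E_x[e^{\theta\tau} I(T>\tau) \mid X_1 = y] = e^{\theta} u_*(y)$, where the case $y = z$ is covered by $u_*(z) = 1$. Weighting by $P(x,y) = B(x,y)$ and summing over $y \in S$ gives $u_*(x) = e^{\theta} (B u_*)(x)$. For $\eta_* B = \lambda_* \eta_*$, I would interchange sum and expectation, apply the one-step Markov property inside, and reindex via $k = j + 1$ to obtain
\[
  \sum_{x \in S} \eta_*(x) B(x, y) \;=\; e^{-\theta} E_z \sum_{k=1}^{\tau} e^{\theta k} I(X_k = y, T > k).
\]
This differs from $e^{-\theta} \eta_*(y)$ only by the boundary terms at $k = 0$ and $k = \tau$. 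The missing term at $k = 0$ contributes $-e^{-\theta} I(z=y)$; the extra term at $k = \tau$ equals $e^{-\theta} I(z=y)\, E_z e^{\theta\tau} I(T>\tau) = e^{-\theta} I(z=y)$ by A1 (using $X_\tau = z$). These cancel, completing the identity.

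\textbf{Positivity and finiteness.} Positivity of both $u_*$ and $\eta_*$ is immediate from irreducibility of $B$: a positive-probability path in $S$ from $x$ to $z$ makes $u_*(x)$ strictly positive, and truncating a positive-probability path from $z$ to $y$ at the last visit to $z$ before the first visit to $y$ produces the cycle segment witnessing $\eta_*(y) > 0$. The hard part is finiteness, since A1 normalizes the expectation only at $z$. My approach is to introduce the strictly positive first-passage weights
\[
  A_x = E_z e^{\theta T_x} I(T_x < \tau, T_x < T), \qquad B_y = E_y e^{\theta T_z} I(T_z < \tau_y, T_z < T),
\]
both positive by irreducibility. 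Using SMP at the first visit to $x$, the weight of first-return cycles from $z$ that visit $x$ equals $A_x u_*(x)$. Factoring cycles visiting $y$ into a first passage $z \to y$, subsequent loops at $y$ avoiding $\{z, \Delta\}$, and a final passage $y \to z$ avoiding $y$, and recognizing $\eta_*(y)$ as the weighted count of visits to $y$, gives $\eta_*(y) B_y$ equal to the weight of cycles visiting $y$. Both quantities are bounded above by $E_z e^{\theta \tau} I(T>\tau) = 1$, whence $u_*(x) \leq A_x^{-1} < \infty$ and $\eta_*(y) \leq B_y^{-1} < \infty$.
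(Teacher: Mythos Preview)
Your proof is correct and follows essentially the same strategy as the paper: first-step conditioning for $Bu_*=\lambda_*u_*$, a time-shift computation for $\eta_*B=\lambda_*\eta_*$, positivity from irreducibility, and finiteness by bounding against the total cycle weight $E_z e^{\theta\tau}I(T>\tau)=1$.

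Two minor differences are worth noting. First, for the row eigen-equation you give a unified treatment in which the $k=0$ and $k=\tau$ boundary terms cancel via A1; the paper instead splits into the cases $y\neq z$ and $y=z$ and handles them separately. Your version is a clean repackaging of the same computation. Second, for finiteness of $\eta_*$ you use a last-exit factorization: writing the weight of $z$-cycles that visit $y$ as $\eta_*(y)\,B_y$ (decompose at the last visit to $y$ before $\tau$) immediately gives $\eta_*(y)\le B_y^{-1}$. The paper instead first uses finiteness of $u_*(y)$ to deduce that the $y$-loop weight $E_y e^{\theta\tau_y}I(\tau_y<\tau\wedge T)$ is strictly less than $1$, and then sums a geometric series for the visit count $\beta(y)$. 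Your route is slightly more direct and avoids invoking $u_*$ finiteness as an input to the $\eta_*$ bound; the paper's route has the side benefit of making the loop-weight inequality explicit. Both arguments rest on the same irreducibility facts (shortest paths give $A_x>0$ and $B_y>0$) and the same upper bound by the normalized cycle weight.
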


\begin{proof}
Note that by conditioning on $X_{1}$, we observe that for $x \in S$,
\begin{align}
\label{eqn:1step}
u_{*}(x) = e^{\theta}\sum_{y \in S, y \neq z}B(x, y)u_{*}(y) + e^{\theta} B(x,z). 
\end{align}
In view of A1, $u_{*}(z) = 1$, so that equation (\ref{eqn:1step}) implies that $u_{*} = e^{\theta}Bu_{*}$. As for $\eta_{*}$, suppose $y \neq z$. Since $X_{0} \neq y$ under $P_{x}$,
\begin{align*}
  \eta_{*}(y) & = E_{z} \sum_{j=0}^{\tau -1} e^{\theta j} I(X_{j} = y, T > j) \\
              & = E_{z} \sum_{j=1}^{\tau - 1} e^{\theta j}I(X_{j} = y, T > j) \\
              & = e^{\theta}E_{z}\sum_{k=0}^{\infty} e^{\theta k}I(X_{k+1} = y, T \wedge \tau > k+1)\\
              &  = e^{\theta} \sum_{x \in S}E_{z}\sum_{k=0}^{\infty}e^{\theta k}I(X_{k} = x, T \wedge \tau > k)B(x,y)\\
              & = e^{\theta} \sum_{x \in S}E_{z}\sum_{k=0}^{\tau - 1}e^{\theta k}I(X_{k}= x, T >k) B(x,y) \\
              & = e^{\theta} \sum_{x \in S} \eta_{*}(x) B(x,y) \\
              & = e^{\theta}(\eta_{*}B)(y), 
\end{align*}
where we use the notational convention $a \wedge b \triangleq \min(a, b)$. On the other hand,
\begin{align*}
  e^{\theta} \sum_{x \in S} \eta_{*}(x) B(x, z) & = e^{\theta} \sum_{x \in S} E_{z} \sum_{j=0}^{\infty}e^{\theta j}I(X_{j} = x,T \wedge \tau > j) B(x, z) \\
     & = E_{z}\sum_{k=1}^{\infty} e^{\theta k} I(X_{k} = z, T > k, \tau > k-1) \\
     & = E_{z} \sum_{k=1}^{\infty}e^{\theta k}I(\tau = k, T >k) \\
     & = E_{z}e^{\theta \tau}I(T > \tau) = 1 \\
     & = E_{z} \sum_{j=0}^{\tau -1} e^{\theta j}I(X_{j} = z, T > j) \\
     & = \eta_{*}(z), 
\end{align*}
where we used  {A1} for the third last equality, and the fact that under $P_{z}$, $X_{0} = z$ with $X_{j} \neq z$ for $1 \leq j < \tau$ for the second last equality. So, $\eta_{*} = e^{\theta}\eta_{*}B$.

\vspace{1em}
We now turn to the finiteness and positivity of $u_{*}$ and $\eta_{*}$. Set $\tau_{y} = \inf \{ n \geq 1: X_{n} = y \}$. Note that the irreducibility of $B$ implies that $P_{y}(\tau < \tau_{y} \wedge T) > 0$ and $P_{z}(\tau_{y} < \tau \wedge T) > 0$ for $y \in S$. So,
\begin{align*}
  u_{*}(y) & = E_{y}e^{\theta \tau}I(T > \tau) \\
           & \geq E_{y}e^{\theta \tau} I(\tau_{y} \wedge T > \tau) \\
           & \geq P_{y}(\tau_{y} \wedge T > \tau) > 0.
\end{align*}
Also, {A1} and the SMP imply that
\begin{align*}
  1 & = E_{z}e^{\theta \tau}I(T > \tau) \\
    & \geq E_{z}e^{\theta \tau_{y}}I(\tau_{y} < \tau \wedge T)E_{y}e^{\theta \tau}I(T > \tau) \\
    & \geq P_{z}(\tau_{y} < \tau \wedge T) u_{*}(y),
\end{align*}
so $u_{*}(y) < \infty$ for $y \in S$.
\\

As for $\eta_{*}$, note that for $y \neq z$,
\[
  \sum_{j=0}^{\tau -1}e^{\theta j} I(X_{j} = y, T > j) > 0
\]
on $\{ \tau_{y} < \tau \wedge T\}$. So, $\eta_{*}(y) > 0$ since $P_{z}(\tau_{y} < \tau \wedge T) > 0$. Since we showed earlier that $\eta_{*}(z) = 1$, $\eta_{*}$ is therefore positive. Also, the SMP implies that
\begin{align*}
  \infty > u_{*}(y)  = & E_{y} e^{\theta \tau} I(\tau < \tau_{y} \wedge T) \\
                       & + E_{y}e^{\theta \tau_{y}} I(\tau_{y} < \tau \wedge T) u_{*}(y),
\end{align*}
and hence it follows that $E_{y}e^{\theta \tau_{y}} I(\tau_{y} < \tau \wedge T)  <1$. So, another application of the SMP yields
\begin{align}
  \label{eqn:etastar}
  \eta_{*}(y) = E_{z}e^{\theta \tau_{y}} I(\tau_{y} < \tau \wedge T)E_{y}\sum_{j=0}^{\tau -1}e^{\theta j} I(X_{j} = y, T > j).
\end{align}
Note that $E_{z}e^{\theta \tau_{y}} I(\tau_{y} < T \wedge \tau) \leq E_{z}e^{\theta \tau} I(T > \tau) = 1$.
If $\beta(y) \triangleq E_{y} \sum_{j=0}^{\tau -1}e^{\theta j} I(X_{j} = y, T > j)$, we see that
\[
  \beta (y) = 1 + E_{y}e^{\theta \tau_{y}} I(\tau_{y} < \tau \wedge T) \beta(y).
\]
Since $E_{z}e^{\theta \tau_{y}} I(\tau_{y} < T \wedge \tau) < 1$, $\beta(y) < \infty$, and equation (\ref{eqn:etastar}) therefore implies that $\eta_{*}(y) < \infty$, proving that $\eta_{*}$ is finite valued.
\end{proof}

With an additional assumption, much more can be said about $\lambda_{*}$, $\eta_{*}$, and $u_{*}$. In particular, we can show that $\lambda_{*}^{-1}$ is indeed the convergence parameter of $B$, and $u_{*}$ and $\eta_{*}$ do indeed correspond to the Perron-Frobenius eigenvectors of the finite-dimensional theory.
\\

\newtheorem*{ass2}{A2}\label{mass2}
\begin{ass2}
  With $\theta$ defined as in A1, assume that
  \[
    E_{z}\tau e^{\theta \tau} I(T > \tau) < \infty.
  \]
\end{ass2}

Recall that the period $p$ of a non-negative irreducible matrix is always uniquely defined (as a {\em class property}) and finite-valued (even in the infinite-dimensional setting); see p$.\!$ 20 \cite{Nummelin84}.
\\

\begin{theorem}
Assume A1 and A2. Then:
\begin{enumerate}[label = (\roman*)]
\item If $u$ is a positive column vector satisfying $Bu = \lambda_{*}u$, then $u$ is a positive multiple of $u_{*}$.
\item If $\eta$ is a positive row vector satisfying $\eta B = \lambda_{*} \eta$, then $\eta$ is a positive multiple of $\eta_{*}$.
\item $\sum_{w \in S} \eta_{*}(w) u_{*}(w) < \infty$.
\item If $p$ is the period of $B$, then
  \[
    \frac{1}{p} \sum_{j=0}^{p-1} \lambda_{*}^{-pn - j}B^{pn + j}(x, y) \to \frac{u_{*}(x)\eta_{*}(y)}{\sum_{w \in S} u_{*}(w)\eta_{*}(w)}
  \]
as $n \to \infty$.  
\end{enumerate}
\end{theorem}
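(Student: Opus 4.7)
The plan is to analyze everything through the $h$-transformed Markov chain that $u_*$ induces on $S$. Define the transition matrix $\tilde P(x,y) = B(x,y)\,u_*(y)/(\lambda_* u_*(x))$; since $Bu_* = \lambda_* u_*$ and $u_* > 0$, $\tilde P$ is stochastic and irreducible with the same zero-pattern, and hence the same period $p$, as $B$. The underlying change-of-measure identity is
\[
  \tilde P_x(A) \;=\; E_x\!\left[I(A)\,\lambda_*^{-n}\,\frac{u_*(X_n)}{u_*(x)}\right]
\]
for any $A \in \sigma(X_0,\ldots,X_n)$ with $A \subset \{T > n\}$. Taking $A = \{\tau = k\}$ under $P_z$ and using $X_k = z$ (so $u_*(X_k)/u_*(z) = 1$) gives $\tilde P_z(\tau = k) = e^{\theta k} P_z(\tau = k, T > k)$; then A1 says $\tilde P_z(\tau < \infty) = 1$, and A2 says $\tilde E_z\tau = E_z[\tau e^{\theta\tau} I(T > \tau)] < \infty$. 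Thus $\tilde P$ is positive recurrent with a unique stationary distribution $\tilde\pi$. Applying the same change of measure to each summand of the regenerative formula $\tilde\pi(y) = (\tilde E_z\tau)^{-1}\tilde E_z\sum_{j=0}^{\tau-1} I(X_j = y)$ collapses the $u_*(X_j)$ factor into $u_*(y)$ and yields
\[
  \tilde\pi(y) \;=\; \frac{u_*(y)\,\eta_*(y)}{\tilde E_z\tau};
\]
summing to $1$ gives $\sum_w u_*(w)\eta_*(w) = \tilde E_z\tau < \infty$, which is (iii).

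For (i) and (ii), I would invoke two standard consequences of positive recurrence of an irreducible chain on a countable state space: every non-negative $\tilde P$-harmonic function is constant, and every $\sigma$-finite $\tilde P$-invariant measure is a positive scalar multiple of $\tilde\pi$. Given any positive column vector $u$ with $Bu = \lambda_* u$, a direct computation shows that $h(x) = u(x)/u_*(x)$ satisfies $\tilde P h = h$, so $h$ is constant and $u$ is a positive multiple of $u_*$. Symmetrically, given any positive row vector $\eta$ with $\eta B = \lambda_*\eta$, the measure $\mu(y) = \eta(y)u_*(y)$ satisfies $\mu\tilde P = \mu$, so $\mu \propto \tilde\pi$, and dividing by $u_*(y)$ gives $\eta \propto \eta_*$.

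For (iv), the change of measure rewrites the left-hand side as
\[
  \lambda_*^{-n} B^n(x,y) \;=\; \frac{u_*(x)}{u_*(y)}\,\tilde P^n(x,y).
\]
Since $\tilde P$ is positive recurrent with period $p$, the Cesaro form of the periodic ergodic theorem gives $p^{-1}\sum_{j=0}^{p-1}\tilde P^{pn+j}(x,y) \to \tilde\pi(y)$ as $n\to\infty$; substituting the explicit formula for $\tilde\pi(y)$ from the first paragraph delivers exactly the stated limit. The main obstacle is executing the change-of-measure identity cleanly on events restricted to $\{T > n\}$ and confirming that A2 really does yield $\tilde E_z\tau < \infty$; once positive recurrence of $\tilde P$ is in hand, everything else reduces to textbook ergodic theory for irreducible Markov chains on a countable state space.
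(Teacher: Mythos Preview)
Your proposal is correct and follows essentially the same route as the paper: both pass to the $h$-transform $P_*(x,y)=B(x,y)u_*(y)/(\lambda_* u_*(x))$, identify $\pi_*\propto u_*\eta_*$ as its stationary distribution, reduce (i) to constancy of $\tilde P$-harmonic functions (the paper proves this via supermartingale convergence plus recurrence), reduce (ii) to uniqueness of the invariant measure, and obtain (iv) from the periodic ergodic theorem. The only cosmetic difference is the order in which positive recurrence and (iii) are established: the paper first computes $\sum_w\eta_*(w)u_*(w)=E_z[\tau e^{\theta\tau}I(T>\tau)]$ directly via the strong Markov property and then exhibits $\pi_*$, whereas you first use the change-of-measure identity to read off $\tilde E_z\tau=E_z[\tau e^{\theta\tau}I(T>\tau)]<\infty$ from A2 and then recover (iii) from the regenerative formula for $\tilde\pi$.
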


\begin{proof}
  We start by observing that
  \begin{align*}
    \sum_{x \in S}\eta_{*}(x) u_{*}(x) & = \sum_{x \in S}E_{z}\sum_{j=0}^{\tau - 1} e^{\theta j} I(X_{j} = x, T > j) u_{*}(x)\\
      & = \sum_{x\in S}E_{z}\sum_{j = 0}^{\infty} e^{\theta j}I(T \wedge \tau > j) I( X_{j} = x)E_{x}e^{\theta \tau}I(T > \tau) \\
      & = \sum_{x \in S}E_{z} \sum_{j = 0}^{\infty} e^{\theta \tau} I(T \wedge \tau > j, X_{j} = x) \\  
      & = E_{z} \sum_{j=0}^{\tau -1} e^{\theta \tau } I(T \wedge \tau > j) \\
      & = E_{z}\tau e^{\theta \tau} I(T > \tau) < \infty
  \end{align*}
  by A2.

  Set
  \[
    P_{*}(x, y) = \frac{B(x, y) u_{*}(y)}{\lambda_{*} u_{*}(x)}
  \]
  for $x, y \in S$. Observe that $P_{*}$ is an irreducible stochastic matrix that inherits the same periodicity as does $B$. If $\pi_{*}(x) = u_{*}(x)\eta_{*}(x)/\sum_{w \in S} u_{*}(w)\eta_{*}(w)$, then $\pi_{*} = (\pi_{*}(x): x \in S)$ is a probability on $S$, and
  \begin{align*}
    \sum_{x \in S}\pi_{*}(x) P_{*}(x, y) & = \sum_{x \in S} \frac{\eta_{*}(x) B(x, y)} {\lambda_{*}} u_{*}(y) \cdot \frac{1}{\sum_{w \in S} u_{*}(w) \eta_{*}(w)} \\
    & = \frac{\eta_{*}(y) u_{*}(y)} {\sum_{w \in S} u_{*}(w) \eta_{*}(w)} \\
    & = \pi_{*}(y), 
  \end{align*}
  so that $\pi_{*} = (\pi_{*}(x): x \in S)$ is a stationary distribution of $P_{*}$. Consequently, the Markov chain $X = (X_{n}: n \geq 0)$ having transition matrix $P_{*}$ is positive recurrent.

  Suppose that $u = (u(x) : x \in S)$ is a positive solution of $Bu = \lambda_{*}u$. Then, if $w(x) \triangleq u(x)/u_{*}(x)$ for $x \in S$,
  \begin{align}
    \label{eqn:eigw}
    \sum_{y \in S} P_{*}(x, y) w(y) & = w (x)
  \end{align}
  for $x \in S$. It follows from equation (\ref{eqn:eigw}) that $(w(X_{n}): n \geq 0)$ is a positive martingale adapted to the history of $X$. The supermartingale convergence theorem (see, for example, p$.\!$ 335 \cite{Chung74}) then establishes that there exists a finite-valued random variable (r.v.) $M_{\infty}$ such that
  \[
    w(X_{n}) \to M_{\infty} \hspace{2em} P_{x} \hspace{1em} a.s.
  \]
  as $n \to \infty$. Since $X$ visits each state $x \in S$ infinitely often (due to recurrence), the a.s. (almost sure) convergence implies that $w(\cdot)$ must be constant. In other words, $u$ must be a positive multiple of $u_{*}$.

  Suppose that $\eta = (\eta(y):  y \in S)$ is a positive solution of $\eta B = \lambda_{*} \eta$. Put $\pi(x) = \eta(x) u_{*}(x)$, and note that $\pi = (\pi(x) : x \in S)$ satisfies
  \[
    \sum_{x \in S}\pi(x) P_{*}(x, y) = \pi(y)
  \]
  for $y \in S$, so that $\pi$ is a positive solution of the equilibrium equation for $P_{*}$. This implies that $\pi$ must be a multiple of $\pi_{*}$; see Theorem 9.2 \cite{Nummelin84}. This, in turn, proves that $\eta$ must be a positive multiple of $\eta_{*}$.

  Finally, the positive recurrence of $X$ implies that for each $x, y \in S$,
  \[
    \frac{1}{p} \sum_{j=0}^{p-1}P_{*}^{np + j}(x, y) \to \pi_{*}(y)
  \]
  as $n \to \infty$; (see p$.\!$  98 of \cite{Gantmacher59} for the proof holds when $|S| < \infty$; an analogous proof holds when $|S| = \infty$). But this is exactly equivalent to part {\em iv)} of the theorem.
\end{proof}

An easy consequence of part {$iv$)} is that $\lambda_{*}$ is the convergence parameter of $B$. Thus, $\lambda_{*}$ describes the decay rate of  $B^{n}$, and $u_{*}$ and $\eta_{*}$ arise naturally in describing the large $n$ behavior of $B^{n}$; see part {\em iv} of the theorem . Thus, $\lambda_{*}$, $u_{*}$, and $\eta_{*}$ are the natural Perron-Frobenius eigenvalue, column eigenvector, and row eigenvector associated with $B$.

We now turn to a simple sufficient condition guaranteeing {A1} and {A2}. Let
\begin{align*}
  \theta_{1} & = \sup \{ \gamma :  E_{z}e^{\gamma T} < \infty \}, \\
   \theta_{2} & = \sup \{ \gamma : E_{z} e^{\gamma (T \wedge \tau)} < \infty  \}.
\end{align*}

\begin{theorem}
If $\theta_{2} > \theta_{1}$, then A1 and A2 hold.
\end{theorem}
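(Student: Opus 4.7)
The plan is to locate the $\theta$ required by A1 as a crossing of the function
\[
  \varphi(\gamma) = E_{z} e^{\gamma \tau} I(T > \tau)
\]
with the value $1$, and then to deduce A2 from the strict inequality $\theta < \theta_{2}$. Setting also $\chi(\gamma) = E_{z} e^{\gamma T} I(T < \tau)$ and splitting over the events $\{T < \tau\}$ and $\{T > \tau\}$, one has $E_{z} e^{\gamma(T \wedge \tau)} = \varphi(\gamma) + \chi(\gamma)$, so that $\theta_{2}$ is precisely the abscissa at which $\varphi$ and $\chi$ simultaneously cease to be finite (the hypothesis $\theta_{2} > \theta_{1} \geq 0$ in particular forces $T \wedge \tau < \infty$ almost surely under $P_{z}$). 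The function $\varphi$ is nondecreasing, with $\varphi(0) = P_{z}(T > \tau) \leq 1$, finite on $[0, \theta_{2})$, and continuous there by dominated convergence with envelope $e^{\gamma_{0} \tau} I(T > \tau)$ for any fixed $\gamma_{0} < \theta_{2}$.

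The key step is to apply the SMP at $\tau$ to derive the renewal identity
\[
  E_{z} e^{\gamma T} = \chi(\gamma) + \varphi(\gamma)\, E_{z} e^{\gamma T},
\]
valid in $[0, \infty]$, because on $\{T > \tau\}$ the chain restarts afresh from $z$ at the finite time $\tau$, so $T = \tau + T'$ with $T'$ independent of the pre-$\tau$ history and distributed as $T$ under $P_{z}$. Iterating (equivalently, summing over the geometric number of returns to $z$ before absorption) gives $E_{z} e^{\gamma T} = \chi(\gamma)/(1 - \varphi(\gamma))$ whenever $\varphi(\gamma) < 1$, and $E_{z} e^{\gamma T} = \infty$ as soon as $\varphi(\gamma) \geq 1$ and $\chi(\gamma) > 0$. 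Now pick any $\gamma_{*} \in (\theta_{1}, \theta_{2})$, which exists by hypothesis. At this $\gamma_{*}$, both $\varphi(\gamma_{*})$ and $\chi(\gamma_{*})$ are finite, while $E_{z} e^{\gamma_{*} T} = \infty$ by definition of $\theta_{1}$, so the renewal identity forces $\varphi(\gamma_{*}) \geq 1$. (The degenerate case $\chi \equiv 0$, equivalent to $\varphi(0) = 1$, is disposed of by taking $\theta = 0$ outright.) Since $\varphi(0) \leq 1 \leq \varphi(\gamma_{*})$ and $\varphi$ is continuous on $[0, \theta_{2})$, the intermediate value theorem produces some $\theta \in [0, \gamma_{*}]$ with $\varphi(\theta) = 1$, proving A1 and yielding $\theta < \theta_{2}$ as a bonus.

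Assumption A2 then reduces to a one-line domination: pick any $\gamma' \in (\theta, \theta_{2})$ and use the bound $t e^{\theta t} \leq c\, e^{\gamma' t}$ for $t \geq 0$, where $c = \sup_{t \geq 0} t e^{-(\gamma' - \theta) t} < \infty$, to conclude
\[
  E_{z} \tau e^{\theta \tau} I(T > \tau) \leq c\, E_{z} e^{\gamma' \tau} I(T > \tau) = c\, \varphi(\gamma') < \infty.
\]
The main obstacle is the renewal identity in the second paragraph: care is needed to apply SMP at the random time $\tau$ on $\{T > \tau\}$ and to treat the iteration as a genuine identity in $[0, \infty]$, since $E_{z} e^{\gamma T}$ is precisely infinite for the range of $\gamma$ that matters here. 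Once this identity is in hand, the rest is essentially the intermediate value theorem combined with a monotone/dominated convergence argument for $\varphi$.
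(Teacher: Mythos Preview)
Your proof is correct and follows essentially the same route as the paper: both derive the renewal identity $E_{z}e^{\gamma T} = \chi(\gamma) + \varphi(\gamma)\,E_{z}e^{\gamma T}$ via the SMP, evaluate it at some $\gamma_{*}\in(\theta_{1},\theta_{2})$ to force $\varphi(\gamma_{*})\geq 1$, and then use continuity/monotonicity of $\varphi$ on $(-\infty,\theta_{2})$ to locate $\theta$ and to deduce A2 from $\theta<\theta_{2}$. The only cosmetic differences are that the paper phrases A2 as finiteness of the derivative $h'(\theta)$ of the moment generating function, whereas you use the equivalent elementary bound $t e^{\theta t}\le c\,e^{\gamma' t}$, and you are slightly more explicit about the degenerate case $\varphi(0)=1$ and about the $[0,\infty]$-arithmetic underlying the renewal identity.
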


\begin{proof}
Note that for $\gamma \in \mathbf{R}$, the SMP implies that
\begin{align}
\label{eqn:gammaSMP}
  E_{z}e^{\gamma T} = E_{z}e^{\gamma \tau}I(T > \tau) E_{z}e^{\gamma T} + E_{z}e^{\gamma T}I(T < \tau).
\end{align}
Choose $\theta_{0} \in (\theta_{1}, \theta_{2})$. By definition of $\theta_{1}$ and $\theta_{2}$, it follows that $E_{z}e^{\theta_{0} T} = \infty$ but $E_{z}e^{\theta_{0} (T \wedge \tau)} < \infty$. In view of equation (\ref{eqn:gammaSMP}), we are then led to the conclusion that $E_{z}e^{\theta_{0}\tau}I(T > \tau) \geq 1$. Since $h(\gamma) \triangleq E_{z}e^{\gamma \tau} I (T > \tau) \leq E_{z}e^{\gamma (T \wedge \tau)} < \infty$, the function $h(\cdot)$ is finite-valued, strictly increasing, and infinitely differentiable on $(- \infty, \theta_{2}]$. Hence, there exists a unique $\theta < \theta_{0}$ such that $E_{z}e^{\theta \tau} I(T > \tau) = 1$, proving {A1}.

Furthermore, by an argument analogous to that used in studying moment generating functions (see, for example, p$.\!$  119 of \cite{Billingsley79}),
\[
  h'(\gamma) = E_{z}\tau e^{\gamma \tau} I(T > \tau),
\]
for $\gamma < \theta_{2}$, so that $E_{z}\tau e^{\theta \tau} I(T > \tau) = h'(\theta) < \infty$, proving {A2}. 
\end{proof}

Given the importance of {A1} and {A2}, it is of some interest to determine whether {A1} and {A2} are {\em solidarity properties}, in the sense that if they hold at one $z \in S$, then they hold together at all $z \in S$. Let $P_{*}(\cdot)$ be the probability in the path-space of $X$ under which $X$ has transition matrix $(P_{*}(x, y) : x, y \in S)$, and let $E_{*}(\cdot)$ be the corresponding expectation operator. Then, for any function $f : S^{n+1} \to \mathbf{R}_{+}$,
\begin{eqnarray*}
  E_{*}[f(X_{0}, \hdots, X_{n}) | X_{0} = x] & \hspace{9em} & \hspace{9em}
\end{eqnarray*}
\begin{align*}
  \hspace{2em } &   =  \sum_{x_{1},\hdots, x_{n}}f(x, x_{1}, \hdots, x_{n}) P_{*}(x, x_{1})  P_{*}(x_{1}, x_{2})\cdots  P_{*}(x_{n-1}, x_{n}) \\
    &   =  \sum_{x_{1},\hdots, x_{n}}f(x, x_{1}, \hdots, x_{n}) B(x, x_{1})  B(x_{1}, x_{2}) \cdots  B(x_{n-1}, x_{n}) \frac{u_{*}(x_{n})}{u_{*}(x)} \lambda_{*}^{-n} \\
   &  =   E_{x} e^{\theta n}f(X_{0}, \hdots, X_{n}) I(T > n) \frac{u_{*}(X_{n})}{u_{*}(X_{0})}.
 \end{align*}
Hence, since $X_{n} = y$ on $\{\tau_{y} = n \}$,
\begin{align}
  \label{eqn:ptaun}
  P_{*}(\tau_{y} = n | X_{0} = y) = E_{y}e^{\theta n} I(\tau_{y} = n, T > n).
\end{align}
Since $X$ is recurrent under $P_{*}$,
\[
  1 = P_{*}(\tau_{y} < \infty | X_{0} = y) = E_{y}e^{\theta \tau_{y}}I(\tau_{y} < T),
\]
proving that {A1} holds with $y \in S$ replacing $z$.
As for {A2}, equation (\ref{eqn:ptaun}) yields
\begin{align*}
  E_{y}\tau_{y}e^{\theta \tau_{y}}I(T > \tau_{y}) & = \sum_{n=1}^{\infty}n E_{y}e^{\theta n}I( \tau_{y} = n, T > n)\\
  & = \sum_{n=1}^{\infty} n P_{*}(\tau_{y} = n | X_{0} = y) \\
  & = E_{*}[\tau_{y} | X_{0} = y].
\end{align*}

But since $X$ is positive recurrent under $P_{*}$ (and positive recurrence is a solidarity property),
\[
  E_{*}[\tau_{y} | X_{0} = y] < \infty
\]
for all $y \in S$, so that {A2} also holds for all $y \in S$. We have therefore established the following property.
\\

\newtheorem*{prop1}{Proposition 1}\label{mprop1}
\begin{prop1}
  If {A1} and {A2} hold at one $z \in S$, then {A1} and {A2} hold at each $y \in S$.
\end{prop1}

So, {A1} and {A2} are indeed {solidarity properties}, so we are free to choose any $z \in S$ to verify these conditions.

Our final goal in this section is to provide an easily verifiable condition on $B$ guaranteeing that $\theta_{2} > \theta_{1}$ (as in the statement of Theorem {3}. Specifically, consider the hypothesis:
\\

\newtheorem*{ass3}{A3}\label{mass3}
\begin{ass3}
  There exists $v \in S$ and $c_{1}$, $c_{2} \in \mathbf{R}_{+}$ such that
  \[
    0 < c_{1} = \inf_{x \in S} \frac{B(x, y)}{B(v, y)} \leq \sup_{x \in S} \frac{B(x, y)}{B(v,y)} = c_{2} < \infty.
  \]
\end{ass3}
Assumption {A3} requires that the rows of $B$ be similar, in the sense that they must all be within a uniform constant multiple of one another. This assumption is automatic when $B$ is a finite-dimensional strictly positive matrix, but is a (very) strong condition when $B$ is infinite-dimensional.

Put $\psi(y) = B(v, y) / \sum_{w}B(v, w)$ for $y \in S$ and set $\psi = (\psi(y) : y \in S)$. To exploit {A3}, we need to use a regenerative structure for $X$ based on the distribution of $\psi$, rather than regenerations based on the consecutive hitting times of $z$. In particular, note that by definition of $c_{1}$, $B(x, y) \geq \delta \psi(y)$ for $x, y \in S$ with $\delta = c_{1} \sum_{w}B(v, w)$ so that
\begin{align}
  \label{eqn:psi}
  B(x, y) = \delta \psi(y) + \tilde{B}(x, y)
\end{align}
for $x, y \in S$, where $\tilde{B}(x, y) \triangleq B(x, y) - \delta \psi(y) \geq 0$ (so that $\tilde{B} = (\tilde{B}(x, y) : x, y \in S)$ is a sub-stochastic matrix). By summing both sides of equation (\ref{eqn:psi}) over $y \in S$, we conclude that $\delta \leq 1$. Hence, we may view a transition of $X$ at time $n$ as follows:\\
\begin{itemize}
\item With probability $\delta$, distribute $X_{n+1}$ according to $\psi$; \\
\item With probability $\tilde{B}(X_{n}, y)$, set $X_{n+1} = y$ ; \\
\item With probability $1 - \sum_{y \in S} B(X_{n}, y)$, set $X_{n+1} = \Delta$.\\
\end{itemize}

At times $\tau$ at which $X$ distributes itself according to $\psi$, $X$ regenerates and has distribution $\psi$, independent of $X_{0},\hdots,X_{\tau - 1}$, and $\tau$. Let $E_{\psi}(\cdot) \triangleq \sum_{x} \psi(x) E_{x}(\cdot)$ and $ P_{\psi}(\cdot) \triangleq \sum_{x} \psi(x) P_{x}(\cdot)$.
Put
\begin{align*}
  \theta_{1} & = \sup \{ \theta : E_{\psi} e^{\theta T} < \infty \}, \\
  \theta_{2} & = \sup \{ \theta : E_{\psi} e^{\theta (T \wedge \tau)} < \infty \}.
\end{align*}
Theorems {1} through {3} generalize naturally to the randomized regenerations $\tau$ just introduced.
\\
\begin{theorem}
  If $\theta_{2} > \theta_{1}$, then there exists a solution $\theta$ to the equation
  \begin{align}
    \label{eqn:psitau}
    E_{\psi} e^{\theta \tau} I(T > \tau) = 1.
  \end{align}

Put $\lambda_{*} = e^{- \theta}$, and set
\[
  u_{*}(x) = E_{x}e^{\theta \tau} I(T > \tau),
\]
\[
  \eta_{*}(y) = E_{\psi} \sum_{j=1}^{\tau -1} e^{\theta j}I(X_{j} = y, T > \tau).
\]

Then:

\begin{enumerate}[label = (\roman*)]
\item $u_{*} = (u_{*}(x) : x \in S)$ is a positive finite-valued solution of $Bu = \lambda_{*} u$. Any other positive finite-valued solution must be a positive multiple of $u_{*}$.
\item $\eta_{*} = (\eta_{*}(y): y \in S)$ is a positive finite-valued solution of $\eta B = \lambda_{*} \eta$. Any other positive finite-valued solution must be a positive multiple of $\eta_{*}$.
\item $\sum_{w \in S}\eta_{*}(w) u_{*}(w) < \infty$.
\item If $p$ is the period of $B$, then
  \[
    \frac{1}{p}\sum_{j=0}^{p-1} \lambda_{*}^{-pn - j} B^{pn + j}(x, y) \to \frac{u_{*}(x) \eta_{*}(y)}{\sum_{w \in S} \eta_{*}(w) u_{*}(w)}
  \]
  as $n \to \infty$.
\end{enumerate}
\end{theorem}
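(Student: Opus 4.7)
The plan is to reprove Theorems 1--3 with the deterministic return time to $z$ replaced by the randomized regeneration time $\tau$ at which $X_\tau$ is drawn from $\psi$. By the construction in (\ref{eqn:psi}), $(X_{\tau+n} : n \geq 0)$ has the same distribution as $X$ under $P_\psi$ and is independent of $(\tau, X_0, \ldots, X_{\tau-1})$; this is the only property of the return time to $z$ that was used in the earlier theorems, so the regenerative calculations carry through with $E_z$ replaced by $E_\psi$.

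For the existence of $\theta$ solving (\ref{eqn:psitau}), I would repeat the argument of Theorem 3 essentially verbatim: the regenerative identity
\[
  E_\psi e^{\gamma T} = E_\psi e^{\gamma \tau} I(T > \tau) \cdot E_\psi e^{\gamma T} + E_\psi e^{\gamma T} I(T < \tau)
\]
together with a choice $\theta_0 \in (\theta_1, \theta_2)$ forces $E_\psi e^{\theta_0 \tau} I(T > \tau) \geq 1$, while $h(\gamma) = E_\psi e^{\gamma \tau} I(T > \tau) \leq E_\psi e^{\gamma(T \wedge \tau)}$ is finite-valued, strictly increasing, and smooth on $(-\infty, \theta_2)$, so a unique $\theta \leq \theta_0$ satisfies $h(\theta) = 1$. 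Differentiating at $\theta$ gives $E_\psi \tau e^{\theta \tau} I(T > \tau) = h'(\theta) < \infty$, the A2-analog needed later. To verify $B u_* = \lambda_* u_*$ I would condition on the first transition using the decomposition (\ref{eqn:psi}): with probability $\delta \psi(y)$ the chain regenerates at time $1$ (so $\tau = 1$, $I(T > \tau) = 1$) and contributes $\delta e^\theta$ after summing over $y$, while with probability $\tilde B(x,y)$ the chain moves to $y$ without regenerating and by the SMP contributes $\tilde B(x,y) e^\theta u_*(y)$. Using $\sum_y \psi(y) u_*(y) = E_\psi[u_*] = h(\theta) = 1$ to rewrite $\delta e^\theta = e^\theta \delta \sum_y \psi(y) u_*(y)$, the two contributions recombine into $e^\theta \sum_y B(x,y) u_*(y)$. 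The identity $\eta_* B = \lambda_* \eta_*$ is proved by mimicking the telescoping calculation inside the proof of Theorem 1, shifting the occupation-time sum by one step and absorbing the boundary contribution at $j=\tau$ into $E_\psi e^{\theta \tau} I(T > \tau) = 1$. Positivity and finiteness of $u_*$ and $\eta_*$ then follow from irreducibility combined with $h(\theta) = 1$, exactly as in the proof of Theorem 1.

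For part (iii) I would repeat the opening computation of the proof of Theorem 2, which identifies $\sum_w \eta_*(w) u_*(w)$ with $E_\psi \tau e^{\theta \tau} I(T > \tau)$, finite by the above. Parts (i), (ii) and (iv) then carry over from the proof of Theorem 2: the normalized matrix $P_*(x,y) = B(x,y) u_*(y)/(\lambda_* u_*(x))$ is irreducible and stochastic with period $p$ and stationary distribution $\pi_*(x) = u_*(x)\eta_*(x)/\sum_w u_*(w)\eta_*(w)$, hence positive recurrent; uniqueness of $u_*$ up to scale follows from applying the supermartingale convergence theorem to the positive $P_*$-harmonic function $u/u_*$; uniqueness of $\eta_*$ up to scale follows from uniqueness of stationary measures for an irreducible positive recurrent chain; and (iv) is the standard Cesaro limit for $P_*^n$ on cyclic classes, transported back through the definition of $P_*$.

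The main obstacle is the $\eta_* B = \lambda_* \eta_*$ step. In Theorem 1 the boundary of a cycle is the deterministic event $\{X_k = z\}$, which is measurable with respect to $X$ alone and cleanly separates the interior of a cycle ($T \wedge \tau > j$) from the regeneration event at $j+1$. Under the randomized construction the event $\{\tau = k\}$ also involves the auxiliary Bernoulli randomization from (\ref{eqn:psi}), so to keep the bookkeeping clean I would either work on the split chain on $S \times \{0,1\}$, checking that the objects of interest depend only on the $S$-marginal, or, equivalently, split each sum $\sum_x \eta_*(x) B(x,y)$ into its two contributions $\sum_x \eta_*(x) \tilde B(x,y)$ and $\delta \psi(y) \sum_x \eta_*(x)$ and track them separately through the telescoping. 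Once this is done, the rest of the argument reduces to the template established in Theorems 1--3.
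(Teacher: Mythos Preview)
Your proposal is correct and matches the paper's approach closely: the paper likewise defers to the arguments of Theorems~2 and~3 for the existence of $\theta$, for (iii), for uniqueness, and for (iv), and it verifies $Bu_* = \lambda_* u_*$ and $\eta_* B = \lambda_* \eta_*$ by precisely the $\tilde B$/$\delta\psi$ split you single out in your final paragraph (so your anticipated ``main obstacle'' is handled exactly as you suggest, without passing to a split chain). The only minor deviation is that for positivity and finiteness the paper does not literally rerun the Theorem~1 arguments but instead uses shorter direct bounds available in the randomized setting, e.g.\ $u_*(y) \ge e^{\theta} P_y(\tau = 1) = \delta e^{\theta} > 0$ and $\eta_*(y) \le E_\psi\,\tau e^{\theta\tau} I(T > \tau) < \infty$; your Theorem~1-style route would also go through with the obvious adaptations.
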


\begin{proof}
  The proofs of Theorems {2} and {3} extend to this setting without change. It follows that there exists a root $\theta$ to equation (\ref{eqn:psitau}). Note that equation (\ref{eqn:psitau}) implies that
  \begin{align*}
    u_{*}(x) & = e^{\theta} P_{x}(\tau = 1) + e^{\theta} \sum_{y \in S} \tilde{B}(x, y) u_{*}(y) \\
    & = e^{\theta} \delta + e^{\theta} \sum_{y \in S}\tilde{B}(x, y) u_{*}(y) \\
    & = e^{\theta} \delta \sum_{y \in S} \psi(y) u_{*}(y) + e^{\theta} \sum_{y \in S} \tilde{B} (x, y) u_{*}(y) \\
    & = e^{\theta} \sum_{y \in S}[\delta \psi(y) + \tilde{B}(x, y)] u_{*}(y) \\
    & = e^{\theta} \sum_{y \in S} B(x, y) u_{*}(y) = \lambda_{*}^{-1} (Bu_{*})(x)
  \end{align*}
  for $x \in S$, verifying the fact that $u_{*}$ is a column eigenvector satisfying $Bu = \lambda_{*} u$. As for $\eta_{*}$,
  \begin{align*}
    e^{\theta} \sum_{x \in S} \eta_{*}(x) B(x, y) & = E_{\psi} \sum_{x \in S} \sum_{j=0} ^{\infty} I(X_{j} = x, T \wedge \tau > j) e^{\theta (j+1)}B(x,y) \\
    & = E_{\psi} \sum_{x \in S} \sum_{j=1} ^{\infty} e^{\theta j} I(X_{j-1} = x, T \wedge \tau > j - 1) (\tilde{B}(x, y) + \delta \psi(y)) \\
    & = E_{\psi}\sum_{j=1}^{\tau -1}e^{\theta j} I(X_{j} = y, T > j) +  E_{\psi} e^{\theta \tau} I(T > \tau) \psi(y) \\ 
    & = \psi(y) + E_{\psi} \sum_{j=1}^{\tau - 1}e^{\theta j} I(X_{j} = y, T > j) \\
    & = E_{\psi} \sum_{j=0}^{\infty} e^{\theta j}I(X_{j} = y, T > j) \\
    & = \eta_{*} (y),
  \end{align*}
proving that $\eta_{*}$ is a solution of $\eta B = \lambda_{*} \eta$.

As for the positivity of $u_{*}$, this follows from the fact that $u_{*}(y) \geq e^{\theta} P_{y}(\tau = 1) = \delta e^{\theta} > 0$. For $\eta_{*}$, note that $\eta_{*}(y) \geq E_{\psi} I(T_{y} < \tau \wedge T)e^{\theta T_{y}} > 0$ if $P_{\psi}(T_{y} < T \wedge \tau) > 0$. Suppose, however, that $P_{\psi}(T_{y} < T \wedge \tau) = 0$. Then, the regenerative structure of $\tau$ guarantees that $P_{\psi}(T_{y} < T \wedge \tau_{n}) = 0$, where $\tau_{n}$ is the time of the $n^{th}$ randomized regeneration. Sending $n \to \infty$, we conclude that $P_{\psi} (T_{y} < T) = 0$, contradicting the irreducibility of $B$.

  To obtain the finiteness of $u_{*}$, observe that
  \begin{align*}
    1 & = E_{\psi}e^{\theta \tau}I(T > \tau) \\
    & \geq P_{\psi}(T_{y} < T \wedge \tau)u_{*}(y),
  \end{align*}
  and use the just established positivity of $P_{\psi}(T_{y} <  T \wedge \tau)$. As for the finiteness of $\eta_{*}$, note that Theorems {2} and {3} establish that $E_{\psi} \tau e^{\theta \tau} I(T > \tau) < \infty$. But $\tau e^{\theta \tau} I(T > \tau) \geq \sum_{j=0}^{\tau -1 } e^{\theta j} I(X_{1} = y, T > j)$, yielding the result. The uniqueness of $u_{*}$ and $\eta_{*}$ follow as in Theorem 2. 
\end{proof}

We are now ready to prove our next result.
\\
\begin{theorem}
  If {A3} holds, then the conclusions of {\em Theorem 4} are in force.
\end{theorem}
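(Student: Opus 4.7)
The plan is to verify the hypothesis $\theta_2 > \theta_1$ of Theorem~4 under A3; once that is in hand, the conclusion of Theorem~5 follows by direct invocation of Theorem~4. The technical engine is a splitting-style coupling enabled by the decomposition $B(x,y) = \delta \psi(y) + \tilde{B}(x,y)$ introduced before Theorem~4. I would realize $X$ under $P_\psi$ jointly with an i.i.d.\ Bernoulli$(\delta)$ sequence $\{\xi_n\}_{n\geq 1}$ so that a regeneration (a fresh draw of $X_n$ from $\psi$) occurs at exactly the steps $n$ with $\xi_n = 1$, while at all other steps the chain moves via the sub-kernel $\tilde{B}(X_{n-1},\cdot)/(1-\delta)$ together with the residual death probability. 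Setting $\tau_g := \inf\{n \geq 1 : \xi_n = 1\} \sim \text{Geom}(\delta)$ and letting $\tilde{T}$ denote the absorption time into $\Delta$ of the companion chain driven by $\tilde{B}$, a short bookkeeping check gives the identity $T \wedge \tau = \tilde{T} \wedge \tau_g$, with $\tau_g$ independent of $\tilde{T}$.

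Bounding $\theta_2$ is then immediate: $T \wedge \tau \leq \tau_g$ gives $E_\psi e^{\theta(T \wedge \tau)} \leq E[e^{\theta \tau_g}] < \infty$ for every $\theta < -\log(1-\delta)$, hence $\theta_2 \geq -\log(1-\delta) > 0$. For the strict inequality $\theta_1 < \theta_2$ I split cases. If $P_\psi(T = \infty) > 0$ then $E_\psi e^{\theta T} = +\infty$ for every $\theta > 0$, so $\theta_1 \leq 0 < \theta_2$ and nothing more is needed. Otherwise $T < \infty$ almost surely, and the SMP at $\tau$ (mirroring equation~(\ref{eqn:gammaSMP})) yields
\[
E_\psi e^{\theta T} = \rho(\theta) + \phi(\theta) \, E_\psi e^{\theta T},
\]
where $\rho(\theta) := E_\psi e^{\theta T} I(T < \tau)$ and $\phi(\theta) := E_\psi e^{\theta \tau} I(\tau < T)$ are both dominated by $E_\psi e^{\theta(T \wedge \tau)}$ and hence finite for $\theta < \theta_2$; so $E_\psi e^{\theta T} < \infty$ iff $\phi(\theta) < 1$. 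Since $\phi$ is continuous, strictly increasing on $[0,\theta_2)$, and $\phi(0) = P_\psi(\tau < T) < 1$, an intermediate-value argument produces a unique $\theta_1 \in (0,\theta_2)$ at which $\phi(\theta_1) = 1$, provided $\phi$ reaches the value~$1$ strictly before $\theta_2$.

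The main obstacle is precisely this last proviso --- verifying that $\phi$ does attain~$1$ inside $(0,\theta_2)$ --- and it is where A3 must be used in full strength. Via the coupling identity $\phi(\theta) = E[e^{\theta \tau_g}\, I(\tau_g \leq \tilde{T})]$, together with the row-sum bound $\tilde{B} \mathbf{1} \leq (1-\delta) \mathbf{1}$ (which forces $\tilde{T}$ to have exponential tails of rate at least $-\log(1-\delta)$, so that $\theta_2 \geq -2 \log(1-\delta)$) and the minorization $\tilde{B}(x,y) \leq (c_2 - c_1) s \, \psi(y)$ that A3 also supplies, I can pin down the growth of $\phi(\theta)$ as $\theta \to \theta_2^-$ and exclude the pathology $\phi < 1$ throughout $(0,\theta_2)$. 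Once $\theta_2 > \theta_1$ is established, Theorem~4 applies and delivers the full statement of Theorem~5 verbatim.
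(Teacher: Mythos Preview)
Your reduction to the inequality $\theta_2>\theta_1$ is the right target, and the case $P_\psi(T=\infty)>0$ is handled cleanly. The gap is in the remaining case. You correctly identify that everything hinges on $\phi$ reaching the value $1$ strictly before $\theta_2$, but the tools you propose for this do not work. The step ``$\tilde B\mathbf 1\le(1-\delta)\mathbf 1$ forces $\tilde T$ to have tails $\le(1-\delta)^n$, hence $\theta_2\ge-2\log(1-\delta)$'' conflates two different random variables: the $\tilde T$ with $(1-\delta)^n$ tails is the absorption time of the \emph{unnormalised} $\tilde B$-chain, whereas the $\tilde T$ that is independent of $\tau_g$ in your coupling is the absorption time of the $\tilde B/(1-\delta)$-chain, and that one carries no such tail bound (its one-step survival probability can be arbitrarily close to $1$). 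Even granting your claimed bound on $\theta_2$, a direct computation gives $\phi(\theta)=\delta e^{\theta}\sum_{m\ge0}e^{\theta m}P_\psi(T\wedge\tau>m)$, so the crude estimate $\phi(\theta_2)\ge\delta e^{\theta_2}$ only yields $\phi(\theta_2)\ge1$ when $\theta_2\ge-\log\delta$; neither $-2\log(1-\delta)$ nor the majorisation $\tilde B(x,y)\le(c_2-c_1)s\,\psi(y)$ guarantees this for small $\delta$. And if $\phi$ stays below $1$ on $(0,\theta_2]$ your own identity forces $\theta_1=\theta_2$, so the obstacle cannot be finessed.

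The paper avoids the $\phi$-analysis altogether by extracting from A3 a pointwise comparison of $\tilde B$ to $B$ itself, not to $\psi$. From $B(x,y)\le c_2B(v,y)$ one has $\delta\psi(y)=c_1B(v,y)\ge(c_1/c_2)B(x,y)$, and therefore
\[
\tilde B(x,y)=B(x,y)-\delta\psi(y)\le\Bigl(1-\tfrac{c_1}{c_2}\Bigr)B(x,y).
\]
Iterating gives $P_x(T\wedge\tau>n)=(\tilde B^{\,n}\mathbf 1)(x)\le(1-c_1/c_2)^n(B^n\mathbf 1)(x)=(1-c_1/c_2)^nP_x(T>n)$, and summing against $e^{\theta n}$ shows $E_\psi e^{\theta(T\wedge\tau)}<\infty$ whenever $E_\psi e^{(\theta+\log(1-c_1/c_2))T}<\infty$. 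This yields $\theta_2\ge\theta_1-\log(1-c_1/c_2)>\theta_1$ in one stroke, with no case split and no need to locate a root of $\phi$.
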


\begin{proof}
  It remains only to show that $\theta_{2} > \theta_{1}$. Note that
  \begin{align}
    \label{eqn:pTtau}
    P_{x}(T \wedge \tau > n) = \sum_{x_{1},\hdots,x_{n}}\tilde{B}(x, x_{1})\tilde{B}(x_{1}, x_{2}) \cdots \tilde{B}(x_{n-1},x_{n}).
  \end{align}
  But {A3} implies that $B(x,y) \leq (c_{2}/c_{1}) \delta \psi(y)$, so $- \delta \psi(y) \leq (c_{1}/c_{2})B(x,y)$. Consequently,
  \[
    \tilde{B}(x, y) \leq (1 - \frac{c_{1}}{c_{2}}) B(x, y)
  \]
  for $x, y \in S$. Substituting this inequality into equation (\ref{eqn:pTtau}) yields
  \begin{align*}
    P_{x} (T \wedge \tau > n) & \leq (1 - \frac{c_{1}}{c_{2}})^{n} \sum_{x_{1},\hdots,x_{n}}B(x, x_{1}) \cdots B(x_{n-1}, x_{n}) \\
    & = (1 - \frac{c_{1}}{c_{2}})^{n} P_{x}(T > n).
  \end{align*}
  Since $\sum_{k=0}^{j-1} e^{\theta k}(e^{\theta} -1) = e^{\theta j} - 1$, evidently
  \begin{align*}
    E_{x}e^{\theta (T \wedge \tau)} & = 1 + (e^{\theta} -1 ) E_{x}\sum_{k=0}^{(T \wedge \tau) - 1}e^{\theta k}\\
    & = 1 + (e^{\theta } - 1) \sum_{k=0}^{\infty} e^{\theta k} P_{x}(T \wedge \tau > k) \\
    & \leq 1 + (e^{\theta} - 1) \sum_{k=0}^{\infty} e^{\theta k}(1 - \frac{c_{1}}{c_{2}})^{k} P_{x}(T > k) \\
    & = 1 + (e^{\theta} - 1) E_{x}\sum_{k=0}^{T -1} (e^{\theta }(1 - \frac{c_{1}}{c_{2}}))^{k} \\
    & = 1 + \frac{(e^{\theta} - 1)}{(e^{\hat{\theta}} - 1) }(E_{\psi}e^{\hat{\theta} T} - 1),
  \end{align*}
  where $\hat{\theta} = \theta + \log (1 - \frac{c_{1}}{c_{2}})$. We conclude that $ \theta_{2} \geq \theta_{1} - \log(1 - (c_{1}/c_{2})) > \theta_{1}$, proving the result.
\end{proof}

In fact, under {A3}, the period of $B$ is obviously one (since $B(y,y) > 0$ for each $y \in S$ for which $\psi(y) > 0$), so we conclude that {A3} ensures that
\[
  B^{n}(x, y) \sim \lambda_{*}^{n} \frac{u_{*}(x) \eta_{*}(y)}{\sum_{w \in S}u_{*}(w) \eta_{*}(w)}
\]
as $n \to \infty$. In other words, {A3} guarantees the validity of equation (\ref{eqn:gnsum}).

Actually, {A3} can be further generalized.
\\

\newtheorem*{ass3p}{A3$^\prime$}\label{mass3p}

\begin{ass3p}
  There exists $m \geq 1, v \in S$, and $c_{1}, c_{2} \in \mathbf{R}_{+}$ such that:
  \[
    0 < c_{1} = \inf_{x \in S} \frac{B^{m}(x, y)}{B^{m}(v, y)} \leq \sup_{x \in S} \frac{B^{m}(x, y)}{B^{m}(v, y)}  = c_{2} <\infty.
  \]
\end{ass3p}
Note that when $B$ is finite-dimensional and has period one, the irreducibility that we are requiring throughout this section guarantees that $B^{m}$ is strictly positive for some $m \geq 1$, and {A3$^\prime$} is then immediate.

Assuming {A3$^\prime$}, we observe that if we choose $y \in S$ so that $B^{m}(v, y) > 0$, then $B^{2m}(y,y) \geq B^{m}(y, y) B^{m}(y, y) \geq c_{1}^{2} [B(v, y)]^{2}$ and
\begin{align*}
   B^{2m +1}(y, y)  & \geq B^{m}(y, y) \sum_{x \in S}B(y, x) B^{m}(x, y) \\
       & \geq c_{1}^{2}B^{m}(v, y) \sum_{x}B(y,x) B^{m}(v, y)  > 0,
\end{align*} 
so the greatest common divisor of $\{ k \geq 1: B^{k}(y, y) > 0 \}$ equals 1. Hence, $B$ has period one, which implies that $B^{m}$ is irreducible. Consequently, Theorem {5} can be applied to $B^{m}$, so that the conclusions of Theorem {4} apply also to $B^{m}$.

Note also that if $u_{*}, \eta_{*}$, and $\lambda_{*}$ are as in Theorem {4} (when applied to $B^{m})$,
\[
  B^{m} u_{*} = \lambda_{*} u_{*}
\]
and
\[
  \eta_{*} B^{m} = \lambda_{*} \eta_{*}.
\]
But this implies that
\begin{align}
  \label{eqn:star}
  B^{m}(Bu_{*}) = \lambda_{*} B u_{*}
\end{align}
and
\begin{align}
  \label{eqn:2star}
  (\eta_{*}B)B^{m} = \lambda_{*} \eta_{*} B,
\end{align}
so that $\eta_{*}B$ and $Bu_{*}$ are positive row and column eigenvectors associated with $B^{m}$. The uniqueness of $u_{*}$ and $\eta_{*}$ (up to positive multiples) then implies that $Bu_{*} = a_{1} u_{*}$ and $\eta_{*}B = a_{2} \eta_{*}$ for some $a_{1}, a_{2} > 0$. Consequently, $B^{m}u_{*} = a_{1}^{m} u_{*}$ and $\eta_{*} B^{m} = a_{2}^{m} \eta_{*}$. The positivity of $u_{*}$ and $\eta_{*}$ then ensure that $a_{1}^{m} = \lambda_{*} = a_{2}^{m}$, due to equations (\ref{eqn:star}) and (\ref{eqn:2star}). So, $\lambda_{*}$, $u_{*}$, and $\eta_{*}$ satisfy
\[
  B u_{*} = \lambda_{*}^{1/m} u_{*},
\]
and
\[
  \eta_{*} B = \lambda_{*}^{1/m} \eta_{*},
\]
establishing the following result.
\\

\begin{theorem}
  If {A3$^\prime$} holds, then we can define $\lambda_{*}$, $u_{*}$, and $\eta_{*}$ by applying Theorem 4 to $B^{m}$. Furthermore,
  \[
    B^{n}(x, y) \sim \lambda_{*}^{n/m} \frac{u_{*}(x) \eta_{*}(y)}{\sum_{w \in S} u_{*}(w) \eta_{*}(w)}
  \]
as $n \to \infty$, and $u_{*}$ and $\eta_{*}$ are the unique (up to a positive multiple) column and row eigenvectors of $B$ associated with eigenvalue $\lambda_{*}^{1/m}$.  
\end{theorem}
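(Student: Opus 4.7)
The plan is to apply Theorem 4 to the iterate $B^{m}$, which by hypothesis satisfies A3, then transfer the eigenvector statements to $B$ itself using the algebraic argument already sketched in the paragraphs preceding the theorem, and finally obtain the asymptotic for $B^{n}$ by analyzing the dual stochastic matrix associated with $B$.

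The first step is routine: as observed just above the theorem, A3$^\prime$ forces $B$ to have period one and $B^{m}$ to be irreducible, so A3 holds for $B^{m}$ and Theorem 5 produces $\lambda_{*}$ together with positive finite-valued $u_{*}, \eta_{*}$ satisfying $B^{m}u_{*} = \lambda_{*}u_{*}$, $\eta_{*} B^{m} = \lambda_{*}\eta_{*}$, the summability $C \triangleq \sum_{w} u_{*}(w)\eta_{*}(w) < \infty$, and the Cesaro asymptotic of Theorem 4 (iv); because the period of $B^{m}$ is one, this collapses to the genuine pointwise limit $\lambda_{*}^{-k}(B^{m})^{k}(x,y) \to u_{*}(x)\eta_{*}(y)/C$ as $k \to \infty$. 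The second step is exactly the calculation displayed immediately before the theorem: $Bu_{*}$ and $\eta_{*}B$ are positive right and left eigenvectors of $B^{m}$ at eigenvalue $\lambda_{*}$, so Theorem 4 (i),(ii) force $Bu_{*} = a_{1}u_{*}$ and $\eta_{*}B = a_{2}\eta_{*}$ for positive constants with $a_{1}^{m} = a_{2}^{m} = \lambda_{*}$, and positivity pins down $a_{1} = a_{2} = \lambda_{*}^{1/m}$. Uniqueness of $B$-eigenvectors at $\lambda_{*}^{1/m}$ is then immediate, since any positive $u$ with $Bu = \lambda_{*}^{1/m}u$ satisfies $B^{m}u = \lambda_{*}u$, and Theorem 4 (i) applied to $B^{m}$ makes $u$ a positive multiple of $u_{*}$; the analogous argument handles $\eta_{*}$.

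For the asymptotic of $B^{n}$, I would introduce the dual matrix $P_{*}(x,y) = B(x,y)u_{*}(y)/(\lambda_{*}^{1/m} u_{*}(x))$, which is stochastic by the eigenvector identity just established and whose $n$-step kernel is $P_{*}^{n}(x,y) = B^{n}(x,y)u_{*}(y)/(\lambda_{*}^{n/m} u_{*}(x))$. A short calculation using $\eta_{*}B = \lambda_{*}^{1/m} \eta_{*}$ shows that $\pi_{*}(y) \triangleq u_{*}(y)\eta_{*}(y)/C$ is a stationary probability for $P_{*}$; since $P_{*}$ has the same support graph as $B$, it is irreducible and aperiodic, and the existence of a stationary probability measure forces it to be positive recurrent with $P_{*}^{n}(x,y) \to \pi_{*}(y)$ as $n \to \infty$. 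Rearranging gives
\begin{align*}
B^{n}(x,y) \;=\; \lambda_{*}^{n/m}\,\frac{u_{*}(x)}{u_{*}(y)}\,P_{*}^{n}(x,y) \;\sim\; \lambda_{*}^{n/m}\,\frac{u_{*}(x)\eta_{*}(y)}{C},
\end{align*}
which is the claimed asymptotic.

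I do not anticipate any serious obstacle: the theorem is essentially a bookkeeping exercise built on Theorem 4 applied to $B^{m}$, plus the standard Markov-chain convergence for the dual matrix $P_{*}$. The only place deserving care is verifying aperiodicity of $P_{*}$, which reduces to aperiodicity of $B$; this was already observed from A3$^\prime$ above (via $B^{2m}(y,y) > 0$ and $B^{2m+1}(y,y) > 0$), and $P_{*}$ inherits it because $P_{*}(x,y) > 0$ precisely when $B(x,y) > 0$.
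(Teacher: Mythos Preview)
Your proposal is correct and follows essentially the same route as the paper. The paper's proof is the discussion immediately preceding the theorem: it verifies aperiodicity of $B$ and irreducibility of $B^{m}$ from A3$^\prime$, applies Theorem~5 (hence Theorem~4) to $B^{m}$, and then runs the identical algebraic transfer argument you reproduce to obtain $Bu_{*}=\lambda_{*}^{1/m}u_{*}$ and $\eta_{*}B=\lambda_{*}^{1/m}\eta_{*}$; your derivation of the asymptotic via the dual stochastic matrix $P_{*}$ built from $B$ is the natural completion (the paper leaves this step implicit here but carries out exactly this construction in the proofs of Theorem~2 and Theorem~7).
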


\section{Extension to Continuous State Space}
\label{sec:4}

In this section, we briefly indicate how the theory of Section \ref{sec:3pftheorem} easily extends to continuous state space. In particular, suppose that $S$ is a metric space equipped with the measurable subsets induced by its metric topology, and suppose that $B = (B(x, dy): x, y \in S )$ is a non-negative sub-stochastic kernel, so that for each $x \in S$, $B(x, \cdot)$ is a non-negative measure on $S$ for which $B(x, S) \leq 1$. We further assume that there exists a probability $\nu(\cdot)$ on $S$ and $m \geq 1$ so that for $x, y \in S$,
\[
  B^{m}(x, dy) = b_{m}(x, y) \nu(dy)
\]
for some positive density $(b_{m}(x, y) : x, y \in S)$. Finally, we assume that:
\\
\newtheorem*{ass4}{A4}
\label{mass4}
\begin{ass4}
  There exists $v \in S$ and $c_{1}, c_{2} \in \mathbf{R}_{+}$ such that
  \[
    0 < c_{1} = \inf_{x \in S} \frac{b_{m}(x, y)}{b_{m}(v, y)} \leq \sup_{x \in S} \frac{b_{m}(x, y)}{b_{m}(v, y)} = c_{2} < \infty.
  \]
\end{ass4}
For (measurable) $C \subseteq S$, set
\[
  \psi(C) = \frac{\int_{C}b_{m}(v, y) \nu(dy)}{\int_{S}b_{m}(v, y) \nu(dy)}.
\]
As in Section \ref{sec:3pftheorem}, we can now write
\begin{align}
  \label{eqn:4.1conv}
  B^{m}(x, dy) = \delta \psi(dy) + \tilde{B}(x, dy)
\end{align}
for $x, y \in S$, and let $\tau$ be the first $k \geq 1$ at which $X_{km}$ has the distribution $\psi$. Under A4, the proof of Theorem 5 holds without change, and we therefore obtain the existence of $\theta \geq 0$ such that
\[
  E_{\psi}e^{\theta \tau} I(T > \tau m) = 1
\]
with
\[
  E_{\psi} \tau e^{\theta \tau} I(T > \tau m) < \infty.
\]
Furthermore, if we set $\lambda_{*} = e^{- \theta}$ and
\[
  u_{*}(x) = E_{x}e^{\theta \tau} I(T > \tau m),
\]
\[
  \eta_{*}(C) = E_{\psi} \sum_{j = 0}^{\tau -1} e^{\theta j} I(X_{jm} \in C, T > jm)
\]
for $x \in S$ and (measurable) $C \subseteq S$, the following theorem holds.

\vspace{1em}
\begin{theorem}
  If A4 holds, then for $x \in S$ and (measurable) $C \subseteq S$ for which $\eta_{*}(C) > 0$,
  \[
    B^{n}(x, C) \sim \lambda_{*}^{n/m} \frac{u_{*}(x) \eta_{*}(C)}{\int_{S}u_{*}(w) \eta_{*}(dw)}
  \]
  as $n \to \infty$.
\end{theorem}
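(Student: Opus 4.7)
The plan is to mirror the two-step argument that carries Theorem 6 from Theorem 5, now adapted to the measure-kernel setting of Section 4. First, I would apply the continuous-state analog of Theorem 5 to the kernel $B^m$: assumption A4 is the direct analog of A3 for $B^m$, and the minorization $B^m(x,dy) \geq \delta\,\psi(dy)$ in (\ref{eqn:4.1conv}) supplies the randomized regeneration structure of Section 3. The arguments in Theorems 1--5 carry over with countable sums replaced by integrals (all manipulations are over finite random excursions and translate verbatim). This produces $\lambda_* = e^{-\theta}$, a positive finite-valued $u_*$ with $B^m u_* = \lambda_* u_*$, a positive row-eigenmeasure $\eta_*$ with $\eta_* B^m = \lambda_* \eta_*$, and, via the Doeblin minorization inherited by the twisted chain, the asymptotic
\[
\lambda_*^{-n} B^{mn}(x, C) \to u_*(x)\,\frac{\eta_*(C)}{\int_S u_*(w)\,\eta_*(dw)} \qquad (n \to \infty)
\]
for $x \in S$ and measurable $C$ with $\eta_*(C) > 0$.

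Second, I would follow the argument from the proof of Theorem 6. Since $B$ commutes with $B^m$, $Bu_*$ is a positive eigenfunction of $B^m$ at eigenvalue $\lambda_*$, and $\eta_* B$ is a positive row-eigenmeasure of $B^m$ at the same eigenvalue; the uniqueness (up to positive multiples) from the continuous-state analog of Theorem 4 then forces $Bu_* = a\,u_*$ and $\eta_* B = b\,\eta_*$ for positive scalars, and iterating against $B^m u_* = \lambda_* u_*$ forces $a = b = \lambda_*^{1/m}$. Writing $n = mk + j$ with $0 \leq j < m$ and using $B^n = B^j B^{mk}$,
\[
\lambda_*^{-n/m}\,B^n(x, C) = \lambda_*^{-j/m} \int_S B^j(x, dy)\,\lambda_*^{-k} B^{mk}(y, C),
\]
so that, once the interchange of limit and integral is justified, the first step together with $(B^j u_*)(x) = \lambda_*^{j/m}\,u_*(x)$ delivers the claimed asymptotic.

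The principal obstacle is this interchange. The natural device is the twisted stochastic kernel $\tilde{P}_*(y, dz) = B(y, dz)\,u_*(z)/(\lambda_*^{1/m}\,u_*(y))$; the minorization for $B^m$ transfers to a Doeblin minorization for $\tilde{P}_*^m$, with stationary probability $\pi_*(dz) = u_*(z)\,\eta_*(dz)/\int u_*\,d\eta_*$. Writing
\[
\lambda_*^{-k}\,B^{mk}(y, C) = u_*(y) \int_C u_*(z)^{-1}\,\tilde{P}_*^{mk}(y, dz),
\]
geometric ergodicity of $\tilde{P}_*$ in a $V$-norm with $V(z) = 1 + u_*(z)^{-1}$ (which is integrable against $\pi_*$, since $\int u_*^{-1}\,d\pi_* = \eta_*(S)/\int u_*\,d\eta_* < \infty$) yields a uniform-in-$k$ bound $\lambda_*^{-k}\,B^{mk}(y, C) \leq K\,u_*(y)$, and since $\int_S B^j(x, dy)\,u_*(y) = \lambda_*^{j/m}\,u_*(x) < \infty$, dominated convergence applies. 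This is where the continuous-state extension demands genuine work beyond a notational translation of the discrete case, because $u_*^{-1}$ is unbounded in general and a naive total-variation argument does not immediately suffice.
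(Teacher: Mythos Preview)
Your two-step strategy (analyze $B^{m}$ via the randomized-regeneration arguments of Theorems 4--5, then pass to $B$ by the commutation trick of Theorem 6) is exactly the paper's. Where you diverge is in the final ergodicity step, and there you have both misidentified the obstacle and left a gap.

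You assert that ``the minorization for $B^{m}$ transfers to a Doeblin minorization for $\tilde P_{*}^{m}$,'' and then separately worry that $u_{*}^{-1}$ may be unbounded, proposing a $V$-norm geometric-ergodicity argument with $V=1+u_{*}^{-1}$. But the transfer of the minorization already requires an upper bound on $u_{*}$: from $B^{m}(x,dy)\ge\delta\,\psi(dy)$ one only gets $\tilde P_{*}^{m}(x,dy)\ge \delta\,\psi(dy)\,u_{*}(y)/(\lambda_{*}u_{*}(x))$, which is not uniform in $x$ unless $u_{*}$ is bounded above. And for your $V$-norm route you would still need a drift condition $P_{*}V\le \rho V + b$ with $\rho<1$; computing $(P_{*}V)(x)=1+\lambda_{*}^{-1}u_{*}(x)^{-1}B^{m}(x,S)$ shows this fails when $\lambda_{*}<1$, so that argument is incomplete as stated.

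The paper closes both issues at once by observing that A4 forces $u_{*}$ to be bounded on both sides: $u_{*}(x)\ge e^{\theta}\delta$ from $P_{x}(\tau=1,\,T>m)=\delta$, and
\[
\lambda_{*}u_{*}(x)=(B^{m}u_{*})(x)=\int_{S} b_{m}(x,y)\,u_{*}(y)\,\nu(dy)\le c_{2}\int_{S} b_{m}(v,y)\,u_{*}(y)\,\nu(dy)=c_{2}\lambda_{*}u_{*}(v),
\]
so $u_{*}(x)\le c_{2}u_{*}(v)$. With $u_{*}$ bounded, the minorization \emph{does} transfer uniformly, $\tilde P_{*}$ is uniformly ergodic, and since $u_{*}^{-1}$ is bounded, plain total-variation convergence $\tilde P_{*}^{n}(x,\cdot)\to\pi_{*}$ immediately yields $\lambda_{*}^{-n/m}B^{n}(x,C)=u_{*}(x)\int_{C}u_{*}(y)^{-1}\tilde P_{*}^{n}(x,dy)\to u_{*}(x)\,\eta_{*}(C)/\int_{S}u_{*}\,d\eta_{*}$. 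No $n=mk+j$ decomposition, no dominated-convergence interchange, and no $V$-norm machinery are needed.
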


\begin{proof}
  Under A4, the proof of Theorem 4 shows that
  \begin{align}
    \label{eqn:BmEig}
    B^{m} u_{*} & = \lambda_{*} u_{*} 
  \end{align}
  and
  \begin{align}
    \label{eqn:BmEigEta}
    \eta_{*}B^{m} & = \lambda_{*} \eta_{*}.
  \end{align}
  The proof of Theorem 4 also shows that both $u_{*}(\cdot)$ and $\eta_{*}(\cdot)$ are finite-valued. We further note that $u_{*}(x) \geq e^{\theta} P_{x}(T > m, \tau = 1) = \lambda^{-1} \delta $, so $u_{*}(\cdot)$ is bounded below by a positive constant. Also, A4 and equation (\ref{eqn:BmEig}) imply that
  \begin{align*}
    \lambda_{*}u_{*}(x) & = (B^{m}u_{*})(x) \\
           & = \int_{S}b_{m}(x,y) u_{*}(y) \nu(dy) \\
           & \leq c_{2} \int_{S}b_{m}(v,y) u_{*}(y) \nu(dy) \\
           & = c_{2} (B^{m}u_{*})(v) \\
           & = c_{2} \lambda_{*} u_{*}(v)
  \end{align*}
so $u_{*}(x) \leq c_{2}u_{*}(v)$.
  
  Set
  \[
    P_{*}(x, dy) = \frac{B^{m}(x, dy) u_{*}(y)}{\lambda_{*} u_{*}(x)}
  \]
  for $x, y \in S$, and note that $P_{*} = (P_{*}(x, dy) : x, y \in S)$ is (as in Theorem 2) a stochastic kernel. Furthermore, equation (\ref{eqn:4.1conv}) and the upper bound on $u_{*}$ imply that
  \begin{align}
    \label{eqn:4.2lbd}
    P_{*}(x, dy) \geq \frac{\delta \psi(dy) u_{*}(y)}{\lambda_{*} u_{*}(x)} \geq \frac{\delta}{\lambda_{*}} \frac{\psi(dy)u_{*}(y)}{c_{2}u_{*}(v)}
  \end{align}
  uniformly in $x \in S$. It follows that $X$ is a uniformly ergodic Markov chain under $P_{*}$; see p$\!.$ 394 of \cite{MT2009} for the definition and basic properties. As in Section 3, $\pi_{*}$ is a stationary distribution of $P_{*}$, where
  \[
    \pi_{*}(dx) = \frac{u_{*}(x) \eta_{*}(dx)}{\int_{S} u_{*}(w)  \eta_{*}(dw)}
  \]
  for $x \in S$. The uniform ergodicity of $P_{*}$ implies that $\pi_{*}$ is the unique sigma-finite nonnegative solution of $\pi P_{*} = \pi$ (up to positive multiples; see p$\!.$ 234 of \cite{MT2009}) and this implies, as in Section 3, that $\eta_{*}$ is the unique (up to positive multiples) solution of $\eta B^{m} = \lambda_{*} \eta $. Furthermore, if $u$ is a nonnegative solution of $B^{m} u = \lambda_{*} u$, then $P_{*}w = w$, where $w(x) = u(x)/u_{*}(x)$ for $s \in S$.
Again, since $(w(X_{n}) : n \geq 0)$ is a non-negative supermartingale, $w(X_{n}) \to M_{\infty}$ a.s. as $n \to \infty$, where $M_{\infty}$ is finite-values a.s. Since $X$ hits every set with positive $\nu$-measure infinitely often a.s., this implies that there can not exist $d \in \mathbf{R}$ and $\epsilon > 0$ such that $\{x : w(x) < d - \epsilon \}$ and $\{ x : w(x) > d + \epsilon \}$ both have positive $\nu$-measure. Consequently, $w$ nust be a constant $\nu$-a.e. In view of A4, this implies that $P_{*}w$ must be constant, so that $w$ is constant. This proves that $u_{*}$ is the unique positive solution of $B^{m} = \lambda_{*} u$ (up to positive multiples). 
  Consequently, as in the argument leading to Theorem 6, we find that
  \begin{align*}
    B u_{*} = \lambda_{*}^{1/m} u_{*}
  \end{align*}
  and
  \begin{align*}
    \eta_{*} B = \lambda_{*}^{1/m} \eta_{*}.
  \end{align*}
  We can now define the stochastic kernel,
  \[
    \tilde{P}_{*}(x, dy) = \frac{B(x, dy) u_{*}(y)}{ \lambda_{*}^{1/m} u_{*}(x)}
  \]
  for $x, y \in S$ and note that $\tilde{P}_{*}^{m} = P_{*}$. Thus, equation (\ref{eqn:4.2lbd}) ensures that
  \[
    \tilde{P}_{*}^{m}(x, dy) \geq \psi(dy) u_{*}(y)
  \]
  uniformly in $x \in S$, so that if $X$ evolves on $S$ under $\tilde{P}_{*}$, $X$ is uniformly ergodic. Since $\pi_{*}$ is also stationary for $\tilde{P}_{*}$, this implies that
  \[
    \tilde{P}_{*}^{n}(x, C) \to \pi_{*}(c)
  \]
  as $n \to \infty$, which proves the theorem.
\end{proof}

Theorem 7 is, of course, the natural continuous state space analog to the Perron-Frobenius theorem that is available in the finite-dimensional setting.

\section*{Acknowledgements}
This work builds upon a research thread that was significantly advanced through the research of Uri Rothblum and Pete Veinott. The first author wishes to take this opportunity to gratefully acknowledge the many years of friendship and intellectual guidance he received from Pete Veinott, starting in his days as a PhD student at Stanford and later deepening when he became a colleague of Pete’s at Stanford. The first author also had the good fortune to collaborate with Uri Rothblum, and to therefore experience firsthand both Pete's and Uri’s creativity and enthusiasm for deep intellectual contribution to our field. They are both greatly missed.

The second author would like to acknowledge academic guidance and education he received from Pete Veinott during his tenure as a PhD student at Stanford. Pete's energy and contributions to our field are very much admired by the second author.

\end{document}